\numberwithin{equation}{section}
\theoremstyle{plain}
\newtheorem{theorem}{Theorem}[section]
\newtheorem{lemma}[theorem]{Lemma}
\newtheorem{corollary}[theorem]{Corollary}
\theoremstyle{definition}
\theoremstyle{remark}
\newcounter{other}            
\def\D{\mathbb{D}}
\def\T{\partial \D}
\def\C{\mathbb{C}}
\def\Q{\mathcal{Q}}
\def\qk{\mathcal{Q}_K}
\def \qkt{\qk(\T)}
\def\qp{\Q_p}
\def\qpt{\Q_p(\T)}
\def\f{\frac}
\numberwithin{equation}{section}
\begin{document}

\title[On absolute values of $\qk$ functions]
{On absolute values of $\qk$ functions}

\author[G. Bao]{Guanlong Bao}
\address{Guanlong Bao \\ Department of Mathematics \\ Shantou University \\ Shantou, Guangdong  515063, China}
\email{glbaoah@163.com}

\author[Z. Lou]{Zengjian Lou}
\address{Zengjian Lou \\ Department of Mathematics \\ Shantou University \\ Shantou, Guangdong  515063, China}
\email{zjlou@stu.edu.cn}

\author[R. Qian]{Ruishen Qian}
\address{Ruishen Qian \\ School of Mathematics and Computation Science \\ Lingnan Normal  University \\ Zhanjiang, Guangdong  524048, China}
\email{qianruishen@sina.cn}

\author[H. Wulan]{Hasi Wulan}
\address{Hasi Wulan \\ Department of Mathematics \\ Shantou University \\ Shantou, Guangdong  515063, China}
\email{wulan@stu.edu.cn}
\thanks{The work is supported by NSF of China (No. 11171203, No. 11371234 and No. 11526131),
NSF of Guangdong Province (No. 2014A030313471) and Project of International Science and
Technology Cooperation Innovation Platform in Universities in Guangdong Province (No.
2014KGJHZ007).}

\subjclass{Primary 30D50, 30H25,  46E15}
\keywords{$\qk$ spaces; absolute values; inner-outer factorisation}

\begin{abstract}
In this paper, the effect of absolute values on the behavior of functions $f$ in the  spaces $\qk$ is investigated. It is clear that $f\in \qkt \Rightarrow |f|\in \qkt$, but the converse is not always true. For $f$ in the Hardy space $H^2$, we give a condition involving the modulus of the function only,  such that this condition together with $|f|\in \qkt$ is  equivalent to $f\in \qk$. As an application, a new criterion for
inner-outer factorisation of $\qk$ spaces is given. These results are also new for $\qp$ spaces.
\end{abstract}

\maketitle

\section{Introduction}

Denote by $\T$ the boundary of  the unit disk $\D$ in the complex plane $\C$.
Let  $H(\D)$  be  the space of functions analytic in $\D$. Throughout this paper, we
assume that $K:[0,\infty)\rightarrow [0, \infty)$ is a
right-continuous and increasing function. A function
$f\in H(\D)$ belongs to the space $\Q_K$ if
$$
\|f\|_{\qk}^2 = \sup_{a\in\D}\,\int_{\D}
|f'(z)|^{2}K\left(g(a, z)\right)dA(z)<\infty,
$$
where $dA$ is the  area measure on $\D$ and $g(a, z)$ is the Green function
in $\D$ with singularity at $a\in\D$. By \cite[Theorem 2.1]{EW}, we know that $\|f\|_{\qk}^2$ is equivalent to
$$
\sup_{a\in\D}\int_\D |f'(z)|^2K\left(1-|\sigma_a(z)|^2\right)dA(z),
$$
where $\sigma_a(z)=\f{a-z}{1-\overline{a}z}$ is a M\"obius transformation of $\D$. If $K(t)=t^p$, $0\leq p<\infty$, then the space $\Q_K$ gives the space
$\Q_p$ (cf. \cite{X1, X2}). In particular,  $\Q_0$ is the Dirichlet space;
$\Q_1=BMOA$, the space of functions with bounded mean oscillation on $\D$;  $\Q_p$ is the Bloch space for all $p>1$.
See  \cite{EW} and \cite{EWX} for more results on $\Q_K$ spaces.
Let $\qkt$ be the space of $f\in L^2(\T)$ with
$$
\|f\|_{\qkt}^2=
\sup_{I\subset \T}\int_I\int_I\frac{|f(\zeta)-f(\eta)|^2}{|\zeta-\eta|^2}K\left(\frac{|\zeta-\eta|}{|I|}\right)|d\zeta||d\eta|<\infty.
$$
Clearly, if $K(t)=t^2$, then $\qkt$ is equal to $BMO(\T)$, the space of functions having bounded mean oscillation on $\T$ (see \cite{G}).

To study $\qk$ and $\qkt$, we usually need two constraints on $K$ as follows.
\begin{equation}\label{Eq 1-1}
\int^1_0\frac{\varphi_K(s)}{s}ds<\infty
\end{equation}
and
\begin{equation}\label{Eq 1-2}
\int^\infty_1\frac{\varphi_K(s)}{s^2}ds<\infty,
\end{equation}
where
$$
\varphi_K(s)=\sup_{0< t\leq 1}K(st)/K(t),\quad 0<s<\infty.
$$

If $K$ satisfies (1.2), then $\qk\subsetneqq BMOA\subsetneqq H^2$,
where $H^2$ denotes the Hardy space in $\D$ (see \cite{D, G}). Thus,
if $K$ satisfies (1.2), then the function $f\in \qk$ has its
non-tangential limit $\widetilde{f}$ almost everywhere on $\T$.
Using the triangle inequality, one gets  that if $g\in \qkt$, then
$|g|$  also belongs to  $\qkt$. In general, the converse is not
true. Consider
$$
g(e^{it})=\begin{cases}
\log t, \ \ & 0<t<\pi, \\
  -\log|t|, \ \  & -\pi<t<0.
   \end{cases}
$$
 By \cite[p. 66]{Gi}, $|g|\in BMO(\T)$, but $g\not\in BMO(\T)$. For $g\in H^2$, it is natural to seek a condition together with $|\widetilde{g}|\in \qkt$
 is  equivalent to $g\in \qk$.
 Our main result, (iii) of Theorem 1.1,  is even new for $\qp$ spaces.

\begin{theorem} \label{th1.1}
Suppose that  $K$ satisfies (1.1) and (1.2). Let $f\in H^2$. Set
$$
d\mu_z(\zeta)=\frac{1-|z|^2}{2\pi|\zeta-z|^2}|d\zeta|,\quad
z\in\D,\quad \zeta\in\partial\D.
$$
Then the following conditions are equivalent.
\begin{enumerate}
\item [(i)] $f\in\qk$.
\item [(ii)] $\widetilde{f}\in\qkt$.
\item[(iii)] $|\widetilde{f}|\in \qkt$ and
\begin{equation}\label{Eq 1-3}
\sup_{a\in
\D}\int_\D\left(\int_{\T}|\widetilde{f}(\zeta)|d\mu_z(\zeta)-|f(z)|\right)^2\frac{K(1-|\sigma_a(z)|^2)}{(1-|z|^2)^2}
dA(z)<\infty.
\end{equation}
\end{enumerate}
\end{theorem}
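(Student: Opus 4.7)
My plan is to establish the equivalences by showing (i) $\Leftrightarrow$ (ii), (ii) $\Rightarrow$ (iii), and (iii) $\Rightarrow$ (i). The first reduces to the boundary trace characterisation of analytic $\qk$ functions: under (1.1)--(1.2), an $H^2$ function $f$ satisfies $f\in\qk$ iff its non-tangential limit $\widetilde f$ lies in $\qkt$, a result already in the $\qk$ literature (see \cite{EW}). The inclusion $|\widetilde f|\in\qkt$ in (ii) $\Rightarrow$ (iii) is then the triangle inequality $\bigl||f(\zeta)|-|f(\eta)|\bigr| \leq |f(\zeta)-f(\eta)|$ applied pointwise inside the $\qkt$ double integral.

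For the integral condition \eqref{Eq 1-3} in (ii) $\Rightarrow$ (iii), set $u(z) = \int_\T |\widetilde f(\zeta)|\,d\mu_z(\zeta)$ and $h(z) = u(z) - |f(z)|$. Two observations: since $u$ is the Poisson extension of $|\widetilde f|\in\qkt$, the harmonic analogue of the trace characterisation yields
\[
\sup_{a\in\D}\int_\D |\nabla u(z)|^2 K(1-|\sigma_a(z)|^2)\,dA(z)<\infty;
\]
and for analytic $f$, $|\nabla|f|| = |f'|$ holds almost everywhere. Combining $|\nabla h| \leq |\nabla u| + |f'|$ with $f\in\qk$ (known from (i)$\Leftrightarrow$(ii)) yields
\[
\sup_{a\in\D}\int_\D |\nabla h(z)|^2 K(1-|\sigma_a(z)|^2)\,dA(z)<\infty.
\]
Since $h\geq 0$ with $h\to 0$ non-tangentially on $\T$, a weighted Hardy inequality adapted to the weight $K(1-|\sigma_a|^2)$ --- whose validity under (1.1)--(1.2) is standard, with constants uniform in $a$ --- then delivers \eqref{Eq 1-3}.

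The main obstacle is (iii) $\Rightarrow$ (i), since the weighted Hardy inequality above is not reversible in general. My plan is to exploit the inner-outer factorisation $f = I\cdot F$ of the $H^2$ function $f$ together with the non-negative decomposition
\[
u(z)-|f(z)| \;=\; \bigl(u(z)-|F(z)|\bigr) \,+\, |F(z)|\bigl(1-|I(z)|\bigr),
\]
where $u\geq|F|$ follows from Jensen's inequality $|F(z)| = \exp(P[\log|\widetilde f|](z)) \leq P[|\widetilde f|](z) = u(z)$, and $|I|\leq 1$ by definition of inner. Condition \eqref{Eq 1-3} then splits into two separate sup-integral bounds, one on $(u-|F|)^2$ and one on $|F|^2(1-|I|)^2$, each carrying the weight $K(1-|\sigma_a|^2)/(1-|z|^2)^2$. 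Combined with $|\widetilde F| = |\widetilde f| \in \qkt$, the first summand is intended to force the outer factor $F\in\qk$; the second, measuring the inner defect $1-|I|$ weighted by $|F|$, should then transfer this to $f = IF \in \qk$. The heart of the matter --- and the step I expect to be the principal technical obstacle --- is precisely to convert the sup-integral control of the superharmonic function $u-|F|$ (vanishing on $\T$) into the $\qk$-norm of the outer function $F$, and then to use the bound on $|F|(1-|I|)$ to absorb the residual contribution $|(IF)'|^2 - |F'|^2$, all without appeal to a reverse Hardy inequality. This reversal, leveraging the rigid structure of outer and inner factors, is where the new analytic content of the theorem will reside.
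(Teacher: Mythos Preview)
Your treatment of (i)$\Leftrightarrow$(ii) agrees with the paper (both cite \cite{EWX}). For (i)$\Rightarrow$(iii), your route through $|\nabla h|$ and a weighted Hardy inequality is circuitous and leaves that inequality unjustified; the paper dispatches \eqref{Eq 1-3} in one line via
\[
\Bigl(\widehat{|\widetilde f|}(z)-|f(z)|\Bigr)^2
\;\le\;\Bigl(\int_{\T}|\widetilde f(\zeta)-f(z)|\,d\mu_z(\zeta)\Bigr)^2
\;\le\;\int_{\T}|\widetilde f(\zeta)|^2\,d\mu_z(\zeta)-|f(z)|^2,
\]
and then invokes Corollary~2.3.

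The genuine gap is in (iii)$\Rightarrow$(i). Your plan is to split $u-|f|=(u-|F|)+|F|(1-|I|)$, deduce $F\in\qk$ from the first summand, and then absorb the inner factor using the second. But the assertion ``$|\widetilde F|\in\qkt$ together with weighted $L^2$ control of $u-|F|$ forces $F\in\qk$'' is precisely the implication (iii)$\Rightarrow$(i) specialised to outer functions, and you offer no argument for it---you explicitly label it the principal obstacle. So the proposal is a reduction to an unproved case, not a proof.

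The paper avoids this bootstrap entirely by invoking a pointwise estimate of B\"oe \cite[p.~237]{Bo}: for any $f=BO\in H^2$,
\[
|f'(z)|\;\le\;\frac{4}{1-|z|}\left(\int_{\T}\bigl|\,|\widetilde f(\zeta)|-\widehat{|\widetilde f|}(z)\,\bigr|\,d\mu_z(\zeta)\;+\;\widehat{|\widetilde f|}(z)-|f(z)|\right).
\]
Squaring and integrating against $K(1-|\sigma_a(z)|^2)$, the first bracket is controlled (after Cauchy--Schwarz) by $\int_{\T}|\widetilde f|^2\,d\mu_z-\bigl(\widehat{|\widetilde f|}\bigr)^2$, which is finite by Theorem~2.1 applied to $|\widetilde f|\in\qkt$; the second bracket is exactly the quantity in \eqref{Eq 1-3}. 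This bounds $\|f\|_{\qk}$ directly, with no need to establish $F\in\qk$ first. The missing ingredient in your plan is precisely B\"oe's inequality, which packages the inner--outer structure into a single derivative bound rather than into separate membership statements.
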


Applying Theorem 1.1, in Section 4,  we will show a new criterion for inner-outer factorisation of $\qk$ spaces.

In this article, the symbol $A\approx B$ means that $A\lesssim
B\lesssim A$. We say that $A\lesssim B$ if there exists a
constant $C$ such that $A\leq CB$.

\section{Preliminaries}

Given $f\in L^2(\T)$, let $\widehat{f}$ be the Poisson extension of $f$. Namely,
$$
\widehat{f}(z)=\int_{\T} f(\zeta)d\mu_z(\zeta), \ z\in \D.
$$
We first give the following  characterization of $\qkt$ spaces. In particular, if $K(t)=t^p$, $0<p<1$, the corresponding result was proved in \cite{X}.

\begin{theorem} Suppose that  $K$ satisfies  (1.1) and (1.2).  Let $f\in L^2(\T)$. Then $f\in \qkt$ if and only if
\begin{equation}\label{Eq 2-1}
\sup_{a\in\D}\int_\D\left(\int_{\partial\D}|f(\zeta)|^2d\mu_{z}(\zeta)-|\widehat{f}(z)|^2\right)
\frac{K(1-|\sigma_a(z)|^2)}{(1-|z|^2)^2} dA(z)<\infty.
\end{equation}
\end{theorem}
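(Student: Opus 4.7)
The plan is to rewrite the left-hand side of (2.1) as a weighted double integral over $\T\times\T$ and match the resulting kernel against the one defining $\|f\|_{\qkt}^2$.

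First, I would use the elementary polarization identity
$$
\int_{\T}|f(\zeta)|^2\, d\mu_z(\zeta)-|\widehat{f}(z)|^2
= \f{1}{2}\int_{\T}\int_{\T}|f(\zeta)-f(\eta)|^2\, d\mu_z(\zeta)\, d\mu_z(\eta),
$$
which follows from $\mu_z$ being a probability measure on $\T$ with barycenter $\widehat{f}(z)$. Substituting into (2.1) and applying Fubini (the integrand is non-negative), the left-hand side of (2.1) becomes
$$
\sup_{a\in\D}\int_{\T}\int_{\T}|f(\zeta)-f(\eta)|^2\,\Phi_a(\zeta,\eta)\,|d\zeta|\,|d\eta|,
$$
where $\Phi_a(\zeta,\eta)=\f{1}{8\pi^2}\int_{\D}\f{K(1-|\sigma_a(z)|^2)}{|\zeta-z|^2|\eta-z|^2}\,dA(z)$.

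Next, I would estimate $\Phi_a$ via the Möbius change of variables $w=\sigma_a(z)$. Using $dA(z)=(1-|a|^2)^2|1-\overline{a}w|^{-4}dA(w)$ together with the boundary identity $|\zeta-\sigma_a(w)|\,|1-\overline{a}w|=|\zeta-a|\,|\sigma_a(\zeta)-w|$, valid for $\zeta\in\T$, one obtains
$$
\Phi_a(\zeta,\eta)=\f{(1-|a|^2)^2}{8\pi^2|\zeta-a|^2|\eta-a|^2}\int_{\D}\f{K(1-|w|^2)}{|1-\overline{\zeta'}w|^2|1-\overline{\eta'}w|^2}\,dA(w),
$$
where $\zeta'=\sigma_a(\zeta)$ and $\eta'=\sigma_a(\eta)$ lie on $\T$. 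Under (1.1)--(1.2), a Forelli--Rudin type estimate adapted to $K$ shows the disk integral is comparable to $K(|\zeta'-\eta'|)/|\zeta'-\eta'|^2$. Since $|\zeta'-\eta'|=(1-|a|^2)|\zeta-\eta|/(|\zeta-a|\,|\eta-a|)$, this collapses to
$$
\Phi_a(\zeta,\eta)\approx \f{1}{|\zeta-\eta|^2}\,K\!\left(\f{(1-|a|^2)|\zeta-\eta|}{|\zeta-a|\,|\eta-a|}\right).
$$

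It remains to verify that the supremum over $a\in\D$ of the resulting weighted double integral is comparable to $\|f\|_{\qkt}^2$. For the $\gtrsim$ direction, given an arc $I\subset\T$ I would pick $a_I\in\D$ at distance $\approx|I|$ from the midpoint of $I$; then $|\zeta-a_I|\approx|\eta-a_I|\approx|I|$ and $1-|a_I|^2\approx|I|$ for $\zeta,\eta\in I$, so the argument of $K$ is comparable to $|\zeta-\eta|/|I|$ and restricting the double integral to $I\times I$ recovers the integrand defining $\|f\|_{\qkt}^2$. For the reverse inequality, I would fix $a\in\D$, dyadically split $\T\times\T$ into Whitney pieces adapted to $a/|a|$ with arc-lengths $\approx 2^n(1-|a|)$, and use the monotonicity of $K$ together with (1.1)--(1.2) to sum each piece against $\|f\|_{\qkt}^2$.

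I expect the principal obstacle to be the Forelli--Rudin type estimate in the second step, namely bounding $\int_{\D}K(1-|w|^2)|1-\overline{\zeta'}w|^{-2}|1-\overline{\eta'}w|^{-2}dA(w)$ uniformly in $\zeta',\eta'\in\T$; this is the place where both (1.1) and (1.2) are used in an essential way. The subsequent Whitney-type decomposition is by now standard in the $\qk$ literature.
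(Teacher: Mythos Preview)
Your approach is correct in outline but takes a genuinely different route from the paper. The paper stays on the disk side throughout: it uses the conformally invariant Littlewood--Paley identity to rewrite the variance as
\[
\int_{\T}|f(\zeta)|^2\,d\mu_z(\zeta)-|\widehat f(z)|^2\approx\int_\D|\nabla\widehat f(w)|^2\bigl(1-|\sigma_z(w)|^2\bigr)\,dA(w),
\]
then applies Fubini together with the kernel estimate of Lemma~2.2 (with $s=1$, $r=2$) to collapse the iterated disk integral to $\int_\D|\nabla\widehat f(w)|^2K(1-|\sigma_a(w)|^2)\,dA(w)$, and finally invokes Pau's characterization of $\qkt$ via the harmonic extension. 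This is short precisely because both Lemma~2.2 and Pau's result are available off the shelf.

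Your route stays on the boundary: polarization, then a direct estimate of the kernel $\Phi_a$, then a comparison of the M\"obius-twisted double integral with the arc-based $\qkt$ norm. The M\"obius substitution and the resulting expression for $\Phi_a$ are correct, and the two-point Forelli--Rudin estimate
\[
\int_\D \frac{K(1-|w|^2)}{|\zeta'-w|^{2}|\eta'-w|^{2}}\,dA(w)\approx \frac{K(|\zeta'-\eta'|)}{|\zeta'-\eta'|^{2}}
\]
does hold under (1.1)--(1.2), though it is not in the paper and needs its own argument; your identification of it as the main obstacle is accurate. The final Whitney step is more delicate than your sketch suggests: after the change of variables it amounts to proving $\|f\circ\sigma_a\|_{\qkt}\lesssim\|f\|_{\qkt}$ uniformly in $a$, i.e.\ M\"obius invariance of the boundary norm, and the summation over dyadic shells genuinely needs both (1.1) and (1.2) (via the summability of $\varphi_K(2^{-k})$ and the growth bound from (1.2)) to avoid a logarithmic loss. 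So your method is viable but considerably longer; the paper's Littlewood--Paley route trades all of this boundary combinatorics for one application of Lemma~2.2 and a citation to \cite{P}.
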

To prove Theorem 2.1, we need the following estimate.

\begin{lemma}
Let (1.1) and (1.2) hold for $K$. If $s<1+c$ \ and \  $2s+r-4\geq 0$, then
$$
 \int_{\D} \frac{K\left(1-|\sigma_a(w)|^2\right)}{(1-|w|^2)^{s}|1-\overline{w}z|^r}dA(w)\approx
\frac{K\left(1-|\sigma_a(z)|^2\right)}{(1-|z|^2)^{s+r-2}}
$$
for all $a, z\in \D$. Here $c$ is a small enough positive constant
which depends only on (1.1) and (1.2).
\end{lemma}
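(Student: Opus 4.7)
The plan is to establish the two inequalities separately. For the lower bound, I would restrict the integration to a pseudo-hyperbolic disk $E(z,\rho) = \{w \in \D : |\sigma_z(w)| < \rho\}$ of some fixed small radius $\rho \in (0,1)$ around $z$. On this disk each of the quantities $1-|w|^2$, $|1-\overline{w}z|^2$, and $1-|\sigma_a(w)|^2$ is comparable to its value at $w=z$, namely $1-|z|^2$, $(1-|z|^2)^2$, and $1-|\sigma_a(z)|^2$ respectively (the comparability in the last case being uniform in $a$, via the identity displayed below). Since $K$ is increasing and right-continuous and $\varphi_K$ is finite on any compact subinterval of $(0,\infty)$, this yields $K(1-|\sigma_a(w)|^2) \approx K(1-|\sigma_a(z)|^2)$ on $E(z,\rho)$. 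Combining these comparisons with $|E(z,\rho)| \approx (1-|z|^2)^2$ gives the lower bound.

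For the upper bound, I would use the identity
$$
1-|\sigma_a(w)|^2 = \frac{(1-|a|^2)(1-|w|^2)}{|1-\overline{a}w|^2}
$$
together with the defining property $K(st) \leq \varphi_K(s) K(t)$ to write
$$
K(1-|\sigma_a(w)|^2) \leq \varphi_K\!\left(\frac{1-|\sigma_a(w)|^2}{1-|\sigma_a(z)|^2}\right) K(1-|\sigma_a(z)|^2).
$$
Conditions (1.1) and (1.2) are known to force a power-type control on $\varphi_K$: there exist $\delta_1, \delta_2 > 0$ such that $\varphi_K(s) \lesssim s^{\delta_1}$ for $0 < s \leq 1$ and $\varphi_K(s) \lesssim s^{1-\delta_2}$ for $s \geq 1$. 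After splitting the integral according to whether the ratio above is at most or greater than $1$, substituting the explicit expression for $1-|\sigma_a(w)|^2$ and using these power bounds reduces the problem to a sum of classical Forelli--Rudin integrals of the form
$$
\int_\D \frac{(1-|w|^2)^b}{|1-\overline{a}w|^{\alpha}|1-\overline{w}z|^{\beta}}\,dA(w),
$$
whose asymptotics are standard. Assembling the resulting estimates delivers the desired upper bound.

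The main obstacle is the bookkeeping across the split. One must track the two different power laws for $\varphi_K$, and verify that the exponents of $1-|w|^2$, $|1-\overline{a}w|$, and $|1-\overline{w}z|$ in the resulting integrals always fall within the range covered by Forelli--Rudin. The hypothesis $s<1+c$ is precisely what guarantees $b>-1$ in the critical Forelli--Rudin integral, the admissible margin $c$ being controlled by $\delta_1$ and $\delta_2$; the condition $2s+r-4 \geq 0$ ensures that the leftover factor produced by Forelli--Rudin has the correct shape to match the stated right-hand side, after converting the $|1-\overline{a}z|$-dependence back into $1-|\sigma_a(z)|^2$.
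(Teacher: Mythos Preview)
Your lower-bound argument is exactly the paper's: restrict the integral to a pseudo-hyperbolic disk $E(z,\rho)$, use the standard comparisons $1-|w|\approx 1-|z|\approx |1-\overline{w}z|$ and $|1-\overline{a}w|\approx |1-\overline{a}z|$ valid there (the latter via \cite[Lemma~4.30]{Zh}), invoke the doubling $K(2t)\approx K(t)$ coming from (1.2) to transfer $K(1-|\sigma_a(w)|^2)$ to $K(1-|\sigma_a(z)|^2)$, and use $|E(z,\rho)|\approx (1-|z|^2)^2$.

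The upper bound is handled differently. The paper does not prove it here at all; it simply cites \cite{BLQW} for the $\lesssim$ direction and moves on. Your proposal instead outlines a self-contained argument: dominate $K(1-|\sigma_a(w)|^2)$ by $\varphi_K$ of the ratio times $K(1-|\sigma_a(z)|^2)$, use the power-type bounds on $\varphi_K$ that (1.1) and (1.2) afford (after the standard replacement of $K$ by an equivalent weight), and reduce to two-point Forelli--Rudin integrals. This is essentially the route taken in \cite{BLQW}, so your sketch reconstructs what the paper is citing. The trade-off is clear: the paper's proof is a one-line reference, while yours is independent of \cite{BLQW} at the price of the exponent bookkeeping you flag; your reading of the roles of the hypotheses $s<1+c$ and $2s+r-4\ge 0$ is correct.
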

\begin{proof} We point out that $$
 \int_{\D} \frac{K\left(1-|\sigma_a(w)|^2\right)}{(1-|w|^2)^{s}|1-\overline{w}z|^r}dA(w)\lesssim
\frac{K\left(1-|\sigma_a(z)|^2\right)}{(1-|z|^2)^{s+r-2}}
$$
was proved in \cite{BLQW}. So we need only to prove the reverse.
For any $z\in \D$,  let
$$
E(z, 1/2)=\{w\in \D:  |\sigma_z(w)|<1/2\}
$$
be  the pseudo-hyperbolic disk. It is well known that
$$
1-|z|\approx 1-|w|\approx |1-\overline{w}z|
$$
for all $w\in E(z, 1/2)$. Furthermore, by \cite[Lemma 4.30]{Zh}, we have that  $|1-a\overline{w}|\approx|1-a\overline{z}|$ for all $a\in \D$ and $w\in E(z, 1/2)$. Since $K$ satisfies (1.2),   $K(2t)\approx K(t)$ for all $t\in (0, 1)$. We obtain
\begin{eqnarray*}
\int_{\D} \frac{K\left(1-|\sigma_a(w)|^2\right)}{(1-|w|^2)^{s}|1-\overline{w}z|^r}dA(w)
&\geq&\int_{E(z, 1/2)} \frac{K\left(1-|\sigma_a(w)|^2\right)}{(1-|w|^2)^{s}|1-\overline{w}z|^r}dA(w)\\
&\approx&\frac{K\left(1-|\sigma_a(z)|^2\right)}{(1-|z|^2)^{s+r-2}},
\end{eqnarray*}
which gives the desired result.
\end{proof}

\vspace{0.1truecm} \noindent{\bf Proof of Theorem 2.1.} For any
$f\in L^2(\T)$, the Littlewood-Paley identity (\cite[p.
228]{G}) shows that
\begin{equation}\label{Eq 2-2}
\int_\D |\nabla \widehat{f}(w)|^2 \log\f{1}{|w|} dA(w)= \f{1}{2\pi}\int_{\T}|f(\zeta)-\widehat{f}(0)|^2|d\zeta|.
\end{equation}
Replacing $\widehat{f}$ by $\widehat{f\circ\sigma_z}$ in (2.2) for $z\in \D$, one obtains
$$
\int_{\T}|f(\zeta)|^2d\mu_z(\zeta)-|\widehat{f}(z)|^2\approx
\int_\D |\nabla \widehat{f}(w)|^2 (1-|\sigma_z(w)|^2) dA(w).
$$
Using Fubini's theorem and Lemma 2.2, we obtain, for all $a\in \D$, that
\begin{eqnarray*}
&~&\int_\D\left(\int_{\partial\D}|f(\zeta)|^2d\mu_{z}(\zeta)-|\widehat{f}(z)|^2\right)
\frac{K(1-|\sigma_a(z)|^2)}{(1-|z|^2)^2} dA(z)\\
&\approx& \int_\D\left(\int_\D |\nabla \widehat{f}(w)|^2 (1-|\sigma_z(w)|^2) dA(w)\right)
\frac{K(1-|\sigma_a(z)|^2)}{(1-|z|^2)^2} dA(z)\\
&\approx& \int_\D |\nabla \widehat{f}(w)|^2 dA(w)\int_\D \frac{(1-|\sigma_z(w)|^2)K(1-|\sigma_a(z)|^2)}{(1-|z|^2)^2} dA(z)\\
&\approx&\int_\D |\nabla \widehat{f}(w)|^2 K(1-|\sigma_a(w)|^2) dA(w).
\end{eqnarray*}
By \cite{P}, we know that $f\in \qkt$ if and only if
$$
\sup_{a\in \D} \int_\D |\nabla \widehat{f}(z)|^2 K(1-|\sigma_a(z)|)dA(z)<\infty.
$$
Therefore, $f\in \qkt$ if and only if
$$
\sup_{a\in\D}\int_\D\left(\int_{\partial\D}|f(\zeta)|^2d\mu_{z}(\zeta)-|\widehat{f}(z)|^2\right)
\frac{K(1-|\sigma_a(z)|^2)}{(1-|z|^2)^2} dA(z)<\infty.
$$
\hfill{$\square$}

By \cite{EWX}, if (1.1) and (1.2) hold for  $K$  and  $f\in H^2$,
then $f\in \qk$ if and only if $\widetilde{f}\in \qkt$. This
together with  Theorem 2.1, gives the following result immediately
which was also obtained in \cite{WY} by  a  different method.

\begin{corollary}
 Suppose that  $K$ satisfies  (1.1) and (1.2).
Let $f\in H^2$.  Then
 $f\in \qk$ if and only if
$$
\sup_{a\in\D}\int_\D\left(\int_{\partial\D}|\widetilde{f}(\zeta)|^2d\mu_{z}(\zeta)-|f(z)|^2\right)
\frac{K(1-|\sigma_a(z)|^2)}{(1-|z|^2)^2} dA(z)<\infty.
$$
\end{corollary}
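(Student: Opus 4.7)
The plan is to obtain Corollary 2.3 as a direct consequence of Theorem 2.1 combined with the known boundary-value characterization of $\qk$. The latter is the result from \cite{EWX} recalled in the paragraph preceding the corollary: under hypotheses (1.1) and (1.2), an $H^2$ function $f$ belongs to $\qk$ if and only if its non-tangential boundary function $\widetilde{f}$ belongs to $\qkt$. So the only task is to rewrite the condition ``$\widetilde{f}\in\qkt$'' in the Poisson-extension form supplied by Theorem 2.1 and then observe that this form simplifies when the boundary function comes from an $H^2$ function.

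The key bridge is the standard fact that for any $f\in H^2$, the Poisson extension of $\widetilde{f}$ recovers $f$ itself on $\D$; that is, $\widehat{\widetilde{f}}(z)=f(z)$ for every $z\in\D$. Granting this, I would apply Theorem 2.1 directly to $\widetilde{f}\in L^2(\T)$. Condition (2.1) for $\widetilde{f}$ reads
\[
\sup_{a\in\D}\int_\D\left(\int_{\T}|\widetilde{f}(\zeta)|^2 d\mu_z(\zeta)-|\widehat{\widetilde{f}}(z)|^2\right)\frac{K(1-|\sigma_a(z)|^2)}{(1-|z|^2)^2}dA(z)<\infty,
\]
and the identification $\widehat{\widetilde{f}}=f$ turns this into the exact expression displayed in Corollary 2.3. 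Theorem 2.1 therefore equates finiteness of the Corollary 2.3 supremum with $\widetilde{f}\in\qkt$.

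Chaining the two equivalences gives $f\in\qk$ $\iff$ $\widetilde{f}\in\qkt$ $\iff$ the condition of Corollary 2.3. No serious obstacle appears: the argument is a one-line composition, and the only technical point to keep in mind is the Poisson-integral representation of $H^2$ functions that licenses the replacement of $\widehat{\widetilde{f}}$ by $f$. I would finish by briefly noting that the integrand inside the outer integral of (2.1) is non-negative (it is a harmonic oscillation in the sense of the Littlewood--Paley identity used in the proof of Theorem 2.1), so all the integrals make sense and the substitution is valid.
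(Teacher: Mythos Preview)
Your proposal is correct and follows exactly the paper's own argument: the paper states that Corollary 2.3 follows ``immediately'' from Theorem 2.1 combined with the \cite{EWX} equivalence $f\in\qk\Leftrightarrow\widetilde{f}\in\qkt$, which is precisely the chain you describe. The only detail you add explicitly (that $\widehat{\widetilde{f}}=f$ for $f\in H^2$) is the standard Poisson-representation fact the paper is implicitly using.
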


\section{Proof of Theorem 1.1}

Recall that $B\in H(\D)$ is called an inner function if $B$ is bounded in $\D$ and $|\widetilde{B}(\zeta)|=1$ for almost every $\zeta\in \T$. An outer function for the Hardy space $H^2$ is the function of the form
$$
O(z)=\eta \exp \left(\int_{\T}\f{\zeta+z}{\zeta-z}\log \psi(\zeta)\f{|d\zeta|}{2\pi}\right), \ \ \eta\in \T,
$$
where $\psi>0$ a.e. on $\T$, $\log \psi\in L^1{(\T)}$ and $\psi \in L^2(\T)$. See \cite{D} for more results on inner and outer functions. Using a technique in \cite{Bo}, we give the proof of Theorem 1.1 as follows.

\vspace{0.1truecm}
\noindent{\bf Proof of Theorem 1.1.}  Note that $(i)\Leftrightarrow (ii)$ was proved in \cite{EWX}.

$(i)\Rightarrow (iii)$. For $f\in \qk$, we have $\widetilde{f}\in \qkt$. The triangle inequality gives that  $|\widetilde{f}|\in \qkt$. For any $z\in \D$,
it follows by H\"older's inequality that
\begin{eqnarray*}
\left(\int_{\partial\D}|\widetilde{f}(\zeta)|d\mu_{z}(\zeta)-|f(z)|\right)^2
&\leq&\left(\int_{\partial\D}|\widetilde{f}(\zeta)-f(z)|d\mu_{z}(\zeta)\right)^2\\
&\leq&\int_{\partial\D}|\widetilde{f}(\zeta)-f(z)|^2d\mu_{z}(\zeta)\\
&=
&\int_{\partial\D}|\widetilde{f}(\zeta)|^2d\mu_{z}(\zeta)-|f(z)|^2.
\end{eqnarray*}
Since $f\in \qk$, the above estimate , together with Corollary 2.3, gives (1.3).

$(iii)\Rightarrow (i)$. If $f\equiv 0$, the result is true. Note that $f\in H^2$.  If $f\not\equiv 0$, then $f$ must be of the form $BO$, where $B$ is an inner function and $O$ is an outer function of $H^2$ (see \cite{D}). By the estimates of $B$ and $O$ respectively, B\"oe \cite[p. 237]{Bo} gave that for any $z\in \D$,
$$
|f'(z)|\leq
\frac{4}{1-|z|}\left(\int_{\T}\left||\widetilde{f}(\zeta)|-\widehat{|\widetilde{f}|}(z)\right|d\mu_z(\zeta)+\widehat{|\widetilde{f}|}(z)-|f(z)|\right).
$$
Here we remind  that
$$
\widehat{|\widetilde{f}|}(z)=\int_{\T}
|\widetilde{f}(\zeta)|d\mu_z(\zeta).
$$
Thus, for any  $a\in \D$, by H\"older's inequality, we deduce that
\begin{eqnarray*}
&~&\int_{\D}|f'(z)|^2K(1-|\sigma_a(z)|)dA(z)\\
&\lesssim&\int_{\D}\left(\int_{\T}\left||\widetilde{f}(\zeta)|-\widehat{|\widetilde{f}|}(z)\right|d\mu_z(\zeta)\right)^2
\frac{K(1-|\sigma_a(z)|)}{(1-|z|^2)^2}dA(z)\\
&~&+\int_{\D}\left(\widehat{|\widetilde{f}|}(z)-|f(z)|\right)^2\frac{K(1-|\sigma_a(z)|)}{(1-|z|^2)^2}dA(z)\\
&\lesssim&\int_{\D}\left(\int_{\T}\left(|\widetilde{f}(\zeta)|-\widehat{|\widetilde{f}|}(z)\right)^2d\mu_z(\zeta)\right)
\frac{K(1-|\sigma_a(z)|)}{(1-|z|^2)^2}dA(z)\\
&~&+\int_{\D}\left(\widehat{|\widetilde{f}|}(z)-|f(z)|\right)^2\frac{K(1-|\sigma_a(z)|)}{(1-|z|^2)^2}dA(z)\\
&\approx&
\int_{\D}\left(\int_{\T}|\widetilde{f}(\zeta)|^2d\mu_z(\zeta)-\left(\widehat{|\widetilde{f}|}(z)\right)^2\right)
\frac{K(1-|\sigma_a(z)|)}{(1-|z|^2)^2}dA(z)\\
&~&+\int_{\D}\left(\widehat{|\widetilde{f}|}(z)-|f(z)|\right)^2\frac{K(1-|\sigma_a(z)|)}{(1-|z|^2)^2}dA(z).
\end{eqnarray*}
By  Theorem 2.1 and (1.3), $f\in\qk$. The proof is complete.
\hfill{$\square$}

\vspace{0.1truecm}

{\bf Remark.} J. Xiao \cite{X} gave an interesting
characterization of $\qp$ spaces in terms of functions with absolute
values. Namely, if $0<p<1$ and $f\in H^2$, then $f\in \qp$ if and
only if $|\widetilde{f}|\in \qpt$ and
$$
\sup_{a\in
\D}\int_\D\left(\left(\int_{\T}|\widetilde{f}(\zeta)|d\mu_z(\zeta)\right)^2-|f(z)|^2\right)
\f{(1-|\sigma_a(z)|^2)^p}{(1-|z|^2)^2}dA(z)<\infty.
$$
Our Theorem 1.1 implies Xiao's result above. In fact, set $K(t)=t^p$, $0<p<1$, in our Theorem 1.1 and Corollary 2.3. Note that
$$
\left(\int_{\T}|\widetilde{f}(\zeta)|d\mu_z(\zeta)\right)^2-|f(z)|^2 \geq \left(\int_{\T}|\widetilde{f}(\zeta)|d\mu_z(\zeta)-|f(z)|\right)^2
$$
and
$$
\left(\int_{\T}|\widetilde{f}(\zeta)|d\mu_z(\zeta)\right)^2 \leq \int_{\T}|\widetilde{f}(\zeta)|^2d\mu_z(\zeta).
$$
Thus, one can obtain Xiao's result directly.

\section{An application to inner-outer factorisation of $\qk$ spaces}

In this section, we will show  a new  criterion for inner-outer decomposition of $\qk$ spaces. In fact, an inner-outer factorisation characterization of $\qk$ spaces has been obtained  in \cite{EWX} as follows.

\vspace{0.3truecm}
\noindent{\bf Theorem A.\  }  {\sl Let $K$ satisfy (1.1) and (1.2) with
$$
\tilde{K}(|z|^2)=-\frac{\partial^2 K(1-|z|^2)}{\partial
z\partial{\overline z}},\quad z\in\D.
$$
Let $f\in H^2$ with $f\not\equiv 0$. Then
 $f\in \qk$ if and only if $f=BO$, where $B$
is an inner function and $O$ is an outer function in $\qk$ for which
\begin{equation}\label{Eq 4-1}
\sup_{a\in\D}\int_\D|O(z)|^2(1-|B(z)|^2)\tilde{K}\left(|\sigma_a(z)|^2\right)|\sigma_a'(z)|^2{dA(z)}<\infty.
\end{equation}}

As an application of Theorem 1.1, we obtain the following result.

\begin{theorem}
Let $K$ satisfy (1.1) and (1.2) with
$$
\tilde{K}(|z|^2)=-\frac{\partial^2 K(1-|z|^2)}{\partial
z\partial{\overline z}},\quad z\in\D.
$$
Let $f\in H^2$ with $f\not\equiv 0$. Then
$f\in \qk$ if and only if $f=BO$, where $B$
is an inner function and $O$ is an outer function in $\qk$ for which
\begin{equation}\label{Eq 4-2}
\sup_{a\in\D}\int_\D|O(z)|^2(1-|B(z)|^2)^2\tilde{K}\left(|\sigma_a(z)|^2\right)|\sigma_a'(z)|^2{dA(z)}<\infty.
\end{equation}
\end{theorem}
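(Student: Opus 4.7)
The plan is in two directions; the forward implication $f\in\qk\Rightarrow$ (4.2) is immediate from Theorem A, since the same inner-outer factorisation works and the pointwise bound $(1-|B(z)|^2)^2\le 1-|B(z)|^2$ shows that (4.2) is a weakening of (4.1). The converse is the content. Assume $f=BO$ with $B$ inner, $O\in\qk$ outer, and (4.2). I will verify condition (iii) of Theorem 1.1 and then invoke the implication (iii)$\Rightarrow$(i). Since $B$ is inner, $|\widetilde f|=|\widetilde O|$ a.e.\ on $\T$, and $O\in\qk$ forces $|\widetilde O|=|\widetilde f|\in\qkt$, so it remains to establish the integral condition (1.3).

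The key algebraic step is the decomposition
\[
\widehat{|\widetilde f|}(z)-|f(z)| = \bigl(\widehat{|\widetilde O|}(z)-|O(z)|\bigr) + |O(z)|\bigl(1-|B(z)|\bigr),
\]
in which both summands are non-negative: the first because $O$ is outer, so Jensen's inequality applied to $\log|O(z)|=\int_{\T}\log|\widetilde O(\zeta)|\,d\mu_z(\zeta)$ yields $|O(z)|\le\widehat{|\widetilde O|}(z)$, and the second because $|B|\le 1$. Squaring and applying $(a+b)^2\le 2a^2+2b^2$, the left-hand side of (1.3) is dominated by the sum of
\[
\textup{(I)}:=\sup_{a\in\D}\int_\D\bigl(\widehat{|\widetilde O|}(z)-|O(z)|\bigr)^{2}\frac{K(1-|\sigma_a(z)|^{2})}{(1-|z|^{2})^{2}}\,dA(z)
\]
and
\[
\textup{(II)}:=\sup_{a\in\D}\int_\D |O(z)|^{2}\bigl(1-|B(z)|\bigr)^{2}\frac{K(1-|\sigma_a(z)|^{2})}{(1-|z|^{2})^{2}}\,dA(z).
\]
The quantity (I) is finite by applying (i)$\Rightarrow$(iii) of Theorem 1.1 to $O\in\qk$. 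For (II), the elementary inequality $(1-|B|)^{2}\le (1-|B|^{2})^{2}$ majorises it by the same integral with $(1-|B|^{2})^{2}$ in place of $(1-|B|)^{2}$.

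The main obstacle is then to transfer this last integral, carrying the weight $K(1-|\sigma_a(z)|^{2})/(1-|z|^{2})^{2}$, to the integral in (4.2), which carries the weight $\tilde K(|\sigma_a(z)|^{2})|\sigma_a'(z)|^{2}$. Using the classical identity $|\sigma_a'(z)|^{2}=(1-|\sigma_a(z)|^{2})^{2}/(1-|z|^{2})^{2}$, this comparison reduces to the pointwise estimate
\[
\tilde K(|w|^{2})\bigl(1-|w|^{2}\bigr)^{2}\approx K\bigl(1-|w|^{2}\bigr),\qquad w\in\D.
\]
For $K(t)=t^{p}$ with $0<p<1$, a direct computation gives $\tilde K(|w|^{2})=p(1-|w|^{2})^{p-2}(1-p|w|^{2})$, whence $\tilde K(|w|^{2})(1-|w|^{2})^{2}=p(1-p|w|^{2})\,K(1-|w|^{2})\approx K(1-|w|^{2})$; the analogous equivalence for general $K$ satisfying (1.1) and (1.2) is a standard consequence of the regularity encoded in $\varphi_K$ and is already used implicitly in the $\qk$ literature. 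Granted this weight equivalence, (4.2) bounds (II); combined with the finiteness of (I), condition (1.3) holds, so Theorem 1.1 delivers $f\in\qk$ and the proof is complete.
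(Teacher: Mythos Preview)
Your argument is correct and follows essentially the same route as the paper: Theorem~A for necessity, the decomposition $\widehat{|\widetilde f|}(z)-|f(z)|=(\widehat{|\widetilde O|}(z)-|O(z)|)+|O(z)|(1-|B(z)|)$, bounding the first term via $O\in\qk$ and the second via (4.2) after the weight comparison. The weight equivalence $\tilde K(|w|^{2})(1-|w|^{2})^{2}\approx K(1-|w|^{2})$ that you invoke as ``standard'' is exactly (4.4) in the paper, attributed there to Wulan and Ye \cite{WY}; you should cite it rather than leave it as a heuristic, since it is the one non-elementary ingredient.
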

{\bf Remark.} Theorem 4.1 shows that formula (4.1) in Theorem A can be replaced by the weaker condition (4.2), and this result is also new for $\qp$ spaces.

\begin{proof}
Necessity.  This is a direct result from Theorem A.

Sufficiency. Let $f=BO$ and $O\in \qk$. Note that  $O\in \qk$  is
equivalent to $\widetilde{O}\in \qkt$. By the triangle inequality, one gets
$|\widetilde{O}|\in \qkt$. Hence $|\widetilde{f}|\in \qkt$. Observe that
\begin{equation}\label{Eq 4-3}
\int_{\T}|\widetilde{f}(\zeta)|d\mu_z(\zeta)-|f(z)|=\int_{\T}|\widetilde{O}(\zeta)|d\mu_z(\zeta)
-|O(z)|+|O(z)|-|B(z)O(z)|.
\end{equation}
Wulan and Ye \cite{WY} gave that  if $K$ satisfies (1.1) and (1.2), then for all $z\in \D$
\begin{equation}\label{Eq 4-4}
\tilde{K}(|z|^2)\approx \frac{K(1-|z|^2)}{(1-|z|^2)^2}.
\end{equation}
By H\"older's inequality, $\widetilde{O}\in \qkt$ and Corollary 2.3, we  show that for any $a\in \D$,
\begin{eqnarray*}
&~&\int_\D\left(\int_{\partial\D}|\widetilde{O}(\zeta)|d\mu_{z}(\zeta)-|O(z)|\right)^2
\frac{K(1-|\sigma_a(z)|^2)}{(1-|z|^2)^2}dA(z)\\
&\leq&\int_\D\left(\int_{\partial\D}\Big|\widetilde{O}(\zeta)-O(z)\Big|^2d\mu_{z}(\zeta)\right)
\frac{K(1-|\sigma_a(z)|^2)}{(1-|z|^2)^2}dA(z)\\
&=&\int_\D\left(\int_{\partial\D}|\widetilde{O}(\zeta)|^2d\mu_{z}(\zeta)-|O(z)|^2\right)
\frac{K(1-|\sigma_a(z)|^2)}{(1-|z|^2)^2}dA(z)<\infty.
\end{eqnarray*}
Combining the above inequality, (4.2),  (4.3) and  (4.4), we get
$$
\sup_{a\in
\D}\int_\D\left(\int_{\T}|\widetilde{f}(\zeta)|d\mu_z(\zeta)-|f(z)|\right)^2\frac{K(1-|\sigma_a(z)|^2)}{(1-|z|^2)^2}
dA(z)<\infty.
$$
Applying Theorem 1.1, we get $f\in \qk$. The proof is complete.
\end{proof}

For $f\in \qk\subseteq H^2$, if we  ignore  the choice of a constant  with modulus one, then $f$ has a unique decomposition with the form
$f(z)=B(z)O(z)$, where $B$ is an inner function and $O$ is an outer function. Combining this  with  Theorem A and Theorem 4.1, we obtain an interesting
result as follows.

\begin{corollary} Suppose that  $K$ satisfies (1.1) and (1.2).
Let    $B$
be an inner function and let $O$ be an outer function in $\qk$. Then the following conditions are equivalent.
\begin{enumerate}
\item [(i)] For some $p\in [1, 2]$,
$$
\sup_{a\in\D}\int_\D|O(z)|^2(1-|B(z)|^2)^p\tilde{K}\left(|\sigma_a(z)|^2\right)|\sigma_a'(z)|^2{dA(z)}<\infty.
$$
\item [(ii)] For all $p\in [1, 2]$,
$$
\sup_{a\in\D}\int_\D|O(z)|^2(1-|B(z)|^2)^p\tilde{K}\left(|\sigma_a(z)|^2\right)|\sigma_a'(z)|^2{dA(z)}<\infty.
$$
\end{enumerate}
\end{corollary}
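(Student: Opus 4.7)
The plan is to reduce the corollary to a bookkeeping exercise that chains Theorem A and Theorem 4.1 through the pointwise monotonicity of $(1-|B(z)|^2)^p$ in $p$. The implication $(ii)\Rightarrow(i)$ is tautological (specialise to any chosen $p$). For $(i)\Rightarrow(ii)$, the strategy is to observe that Theorem A characterises $f=BO\in\qk$ by the integral condition at the endpoint $p=1$, while Theorem 4.1 does the same at the endpoint $p=2$. Hence both endpoint conditions are equivalent to each other (through $\qk$-membership), and the interior range $p\in(1,2)$ is then handled by monotonicity.

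First I would fix some $p_0\in[1,2]$ for which (i) is assumed to hold. Since $B$ is inner we have $|B(z)|\le 1$ for every $z\in\D$, so $1-|B(z)|^2\in[0,1]$ and the map $p\mapsto(1-|B(z)|^2)^p$ is pointwise nonincreasing on $[0,\infty)$. Because $p_0\le 2$, this gives $(1-|B(z)|^2)^2\le(1-|B(z)|^2)^{p_0}$ pointwise, which upgrades the hypothesis (i) at $p_0$ to condition (4.2) of Theorem 4.1. Combined with the standing assumption $O\in\qk$, Theorem 4.1 produces $f:=BO\in\qk$. Applying Theorem A in the reverse direction then yields condition (4.1), which is precisely (i) at $p=1$.

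To finish, I would pass from $p=1$ to an arbitrary $p\in[1,2]$ by the same monotonicity used in the opposite direction: for $p\ge 1$ we have $(1-|B(z)|^2)^p\le 1-|B(z)|^2$, so the integral at parameter $p$ is dominated uniformly in $a\in\D$ by the (now known to be finite) integral at parameter $1$. This establishes (ii).

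Honestly, there is no real obstacle: the whole argument is a one-shot application of the two endpoint theorems and the elementary fact that a number in $[0,1]$ decreases when raised to a larger exponent. The only point worth being careful about is keeping the direction of monotonicity straight: condition (i) at smaller $p$ is the \emph{stronger} statement, so $p=1$ is the genuinely nontrivial endpoint and $p=2$ is the weaker one; the nontrivial content of the corollary therefore lies almost entirely in the already-proven implication that (4.2) plus $O\in\qk$ suffices for $f\in\qk$ (Theorem 4.1).
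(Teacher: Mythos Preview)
Your argument is correct and mirrors the paper's intended proof exactly: chain Theorem 4.1 and Theorem A through the pointwise monotonicity of $p\mapsto(1-|B(z)|^2)^p$. The only point the paper makes explicit that you leave implicit is the uniqueness (up to a unimodular constant) of the inner--outer factorisation of $f=BO\in H^2$, which is what justifies reading condition (4.1) back onto the \emph{given} pair $(B,O)$ after invoking the necessity direction of Theorem A.
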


\end{document}